\def\bbr{{\Bbb R}}
\def\esp{{\hbox{\rm ess$\,$sup}}}
\def\rn{\bbr^n}
\def\part{\partial}
\def\intl{\int\limits}
\def\Gam{\Gamma}
\def\a{\alpha}
\def\om{\omega}
\def\del{\delta}
\def\vp{\varphi}
\def\gam{\gamma}
\def\sig{\sigma}
\def\lam{\lambda}
\def\e{\varepsilon}
\def\t{\tau}
\newtheorem{theorem}{Theorem}[section]
\theoremstyle{definition}
\theoremstyle{remark}
\newtheorem{remark}[theorem]{Remark}
\theoremstyle{corollary}
\numberwithin{equation}{section}
\newcommand{\be}{\begin{equation}}
\newcommand{\ee}{\end{equation}}
\newcommand{\bea}{\begin{eqnarray}}
\newcommand{\eea}{\end{eqnarray}}
\newcommand{\Bea}{\begin{eqnarray*}}
\newcommand{\Eea}{\end{eqnarray*}}
\def\sideremark#1{\ifvmode\leavevmode\fi\vadjust{\vbox to0pt{\vss% the remark
 \hbox to 0pt{\hskip\hsize\hskip1em%                          will appear only
\vbox{\hsize2cm\tiny\raggedright\pretolerance10000%          on the side
 \noindent #1\hfill}\hss}\vbox to8pt{\vfil}\vss}}}%
\begin{document}

\title[Semyanistyi fractional integrals]
{Weighted norm estimates for the Semyanistyi fractional integrals and  Radon transforms}

\author{B. Rubin}
\address{
Department of Mathematics, Louisiana State University, Baton Rouge,
LA, 70803 USA}

\email{borisr@math.lsu.edu}
%\thanks{The  research  was supported in part by
% the Louisiana EPSCoR program, sponsored  by NSF and the Board of
%Regents Support Fund.}

\subjclass[2010]{Primary 44A12; Secondary 47G10}

%\date{June, 21, 2012 and, in revised form 000.}

\dedicatory{Dedicated to Professor Sigurdur Helgason
on  his 85th birthday}

\keywords{Radon  transforms, weighted norm estimates.}

\begin{abstract}
 Semyanistyi's fractional integrals have come to analysis from  integral geometry. They take functions on $\bbr^n$ to functions on hyperplanes,
 commute with rotations, and have a nice behavior with respect to dilations. We obtain sharp  inequalities for these integrals and the corresponding  Radon  transforms
 acting on  $L^p$ spaces with a radial power weight. The operator norms are explicitly  evaluated. Similar results are obtained for fractional integrals associated to $k$-plane transforms for any $1\le k<n$.
\end{abstract}

\maketitle

\section{Introduction}

\setcounter{equation}{0}

The  Radon transforms play the central role in  integral geometry and have numerous applications \cite{Dea, He, Mar, Na}.
 In 1960 V.I. Semyanistyi \cite{Se1}
came up with an  idea to
 regard the  Radon transform on $\bbr^n$
 as a member of suitable analytic family of fractional integrals $R^\a f$,
  so  that for sufficiently good $f$,
 \be\label {nton} R^\a f\big |_{\a=0}=R f.\ee
 This idea paves the way to  a variety
of inversion formulas for the Radon transform and has proved to be useful in subsequent
developments; see, e.g., \cite{OR} and references therein.

We recall basic definitions. Let $\Pi_n$ be the set of all hyperplanes in $\bbr^n$. The  Radon transform takes a function $f (x)$ on $\bbr^n$ to a function $(Rf)(\t)= \int_\t f$ on $\Pi_n$. The corresponding Semyanistyi's fractional integrals  are defined by
   \be\label{tong2} (R^\a
f)(\t)=\frac{1}{\gamma_1(\a)}\intl_{\bbr^n} f(x)\,[{\rm
dist}(x,\t)]^{\a -1}\, dx,
\ee
\be\label{tong22} \gamma_1(\a)=
  2^\a\pi^{1/2}\Gamma(\a/2)/\Gamma((1-\a)/2),\quad \a >0; \; \a \neq 1,3, \ldots,\ee
 where ${\rm dist}(x,\t)$ is the
Euclidean distance between the point $x$ and the hyperplane $\t$.
The  normalizing coefficient in   (\ref{tong2}) is inherited from  one-dimensional Riesz potentials \cite{La, Ru96}.
More general integrals, when $\t$ is a plane of arbitrary dimension $1\!\le\! k\!\le \!n\!-\!1$, were introduced in \cite{Ru04b}.

 Mapping properties of  Radon transforms   were studied in numerous publications
 from different points of view; see, e.g.,  \cite{Cal, Chr84, Dr87, DNO, F, KR10, LT, Na, OS, Q, So,
  Str}, to mention a few. In Section 2 we suggest an alternative approach which works well in weighted $L^p$ spaces with a radial power weight,  yields sharp  estimates and explicit formulas for the operator norms, and  extends to fractional integrals (\ref{tong2}). Similar results are obtained in Section 3 for more general operators associated to the $k$-plane transform for any $1\le k\le n-1$.  Main results of the paper are presented by Theorems \ref{o9iu},  \ref{o9iur}, \ref{o9iuk},  and \ref{o9iurk}.

  Our approach  was
 inspired by a series of works on
operators with homogeneous kernels; see, e.g.,  \cite {Kar, Sa, Str69, Wa}. A  common point for this  class of operators and the afore-mentioned operators of integral geometry is  a nice behavior with respect to rotations and dilations.
The same approach is applicable to the dual Radon transforms and the corresponding dual Semyanistyi integrals; see, e.g., \cite {Ru12}, where a different technique has been used.

\section {Fractional integrals associated to the Radon transform}

\subsection {Preliminaries}  In the following $ \; \sigma_{n-1}  = 2 \pi^{n/2}/
 \Gamma(n/2)$ is the area of the unit sphere $S^{n-1}$ in $\rn; \;$ $d\sig(\eta)$ stands for the surface element of $S^{n-1}$;
  $e_1, \ldots, e_n$ are coordinate unit
 vectors; $O(n)$ is the  group of orthogonal  transformations of $\bbr^n$ endowed with the
 invariant probability measure. The $L^p$ spaces, $1\le p\le \infty$, are defined in a usual way,
 $1/p+1/p'=1$. We recall that $\Pi_n$ denotes the set of all hyperplanes $\t$ in $\bbr^n$. Every  $\t\in\Pi_n$ can be  parametrized as
 \be\label{hppl} \t(\theta, t)=\{x\in \bbr^n: x \cdot \theta=t\}, \qquad (\theta, t)\in
 S^{n-1}\times \bbr. \ee
 Clearly, \be\label{00ma} \t (\theta, t)=\t(-\theta, -t).\ee
  We set
  \[
L^p_\mu(\bbr^n)=\{ f: ||f||_{p,\mu}\equiv |||x|^\mu f||_{L^p(\bbr^n)}<\infty \},\]
 \[
L^p_\nu (S^{n-1}\times \bbr)=\{ \vp: ||\vp||^\sim_{p,\nu}\equiv |||t|^\nu \vp||_{L^p(S^{n-1}\times \bbr)}<\infty \},\]
where $\mu$ and $\nu$ are real numbers.
 Passing to polar coordinates, we have
\be\label{00ol}
||f||_{p,\mu}\!=\! \left (\!\sigma_{n-1}\intl_0^\infty\! r^{n-1+\mu p}\,dr\!\!\intl_{O(n)} \!\!|f (r\gam e_1)
|^p\,d\gam\right )^{1/p}\!\!\!\!, \quad 1\le p< \infty,\ee
\be\label{00olb} ||f||_{\infty,\mu}=\underset{r, \gam}{\esp} \, r^\mu\,|f (r\gam e_1)|.\ee
Similarly,
\be\label{zzxs}
||\vp||^\sim_{p,\nu}\!=\! \left (\!\sigma_{n-1}\intl_{-\infty}^\infty \!\!|t|^{\nu p}\,dt\!\intl_{O(n)} \!|\vp (\gam e_1, t)
|^p\,d\gam\right )^{1/p}\!\!\!\!, \quad 1\le p< \infty,\ee

\be\label{zzxsb}||\vp||^\sim_{\infty,\nu}=\underset{t, \gam}{\esp} \, |t|^\nu\,|\vp (\gam e_1, t)|.\ee
We write the Radon transform in terms of the parametrization  (\ref {hppl}) as
\be\label{rtra1} (Rf)(\t)\equiv (Rf) (\theta, t)=\intl_{\theta^{\perp}} f(t\theta +
u) \,d_\theta u,\ee
 where $\theta^\perp\!=\!\{x: \, x \cdot \theta\!=\!0\}$,
$d_\theta u$ denotes the Lebesgue measure on $\theta^{\perp}$. Similarly,
\be\label{tong} (R^\a f)(\t)\equiv (R^\a
f)(\theta, t)=\frac{1}{\gamma_1 (\a)}\intl_{\bbr^n}
f(x)|t-x\cdot\theta|^{\a -1} \, dx. \ee

\begin{theorem}\label {o9iu} Let  $1\le p\le \infty$, $1/p+1/p'=1$,  $\a >0$. Suppose
\be\label{op98} \nu=\mu -\a-(n-1)/p',\ee
\be\label{op981} \a-1+n/p'<\mu<n/p'.\ee
Then
\be\label{op982}
||R^\a f||^\sim_{p,\nu}\le c_\a \,||f||_{p,\mu},\ee
where
\[
c_\a=||R^\a||=\frac{\displaystyle  {2^{1/p -\a}\, \pi^{(n-1)/2}\,\Gam \left(\frac{n/p' -\mu}{2}\right )\, \Gam \left(\frac{1-\a+\mu -n/p'}{2}\right)}}{\displaystyle{\Gam \left(\frac{\mu +n/p}{2}\right)\,\Gam \left(\frac{\a+n/p' -\mu}{2}\right)}}\,
.\]
\end{theorem}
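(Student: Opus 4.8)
The plan is to use the rotation and dilation symmetry of $R^\a$ to collapse (\ref{op982}) to a one-dimensional inequality for an integral operator with a \emph{positive} kernel that is homogeneous of degree $-1$, and then to read off the exact norm from the Hardy--Littlewood--P\'olya theory of such operators (cf. \cite{Kar, Sa, Wa}). I would begin by recording the scaling. Writing $f_\lam(x)=f(\lam x)$, one sees from (\ref{tong}) that $(R^\a f_\lam)(\theta,t)=\lam^{1-n-\a}(R^\a f)(\theta,\lam t)$, while (\ref{00ol}) and (\ref{zzxs}) give $||f_\lam||_{p,\mu}=\lam^{-n/p-\mu}||f||_{p,\mu}$ and $||R^\a f_\lam||^\sim_{p,\nu}=\lam^{1-n-\a-\nu-1/p}||R^\a f||^\sim_{p,\nu}$. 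Requiring (\ref{op982}) to be invariant under $f\mapsto f_\lam$ forces the two exponents of $\lam$ to agree, which is exactly the balance condition (\ref{op98}); thus (\ref{op98}) is not an extra hypothesis but the homogeneity constraint, and any admissible constant is automatically scale invariant. I would also note that the window (\ref{op981}) is nonempty only when $0<\a<1$ --- precisely the range in which $|t-x\cdot\theta|^{\a-1}$ is a locally integrable positive function and $\gam_1(\a)>0$ --- so throughout I may treat $R^\a$ as an honest positive integral operator.

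Next I would reduce to radial $f$ without losing the sharp constant. Since $R^\a$ commutes with $O(n)$, a change of variable $x\mapsto\gam x$ in (\ref{tong}) gives $(R^\a f)(\gam e_1,t)=(R^\a f_\gam)(e_1,t)$ with $f_\gam(y)=f(\gam y)$. Viewing $(R^\a|f_\gam|)(e_1,t)=\intl_{\rn}|f(\gam y)|\,w_t(y)\,dy$ with the positive weight $w_t(y)=\gam_1(\a)^{-1}|t-y\cdot e_1|^{\a-1}$, Minkowski's integral inequality in $\gam\in O(n)$ bounds $(\intl_{O(n)}|(R^\a f)(\gam e_1,t)|^p\,d\gam)^{1/p}$ by $(R^\a F_0)(e_1,t)$, where $F_0(|y|)=(\intl_{O(n)}|f(\gam y)|^p\,d\gam)^{1/p}$ is radial and satisfies $||F_0||_{p,\mu}=||f||_{p,\mu}$. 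As radial functions are themselves admissible inputs, the operator norm is attained on the radial subspace, where $(R^\a f)(\theta,t)$ depends only on $|t|$. Passing to polar coordinates and integrating the kernel over the sphere turns $R^\a$, restricted to radial profiles $f_0$, into
\[
(R^\a f)(e_1,t)=\intl_0^\infty k(t,s)\,f_0(s)\,ds,\qquad k(t,s)=\frac{\sig_{n-2}\,s^{n-1}}{\gam_1(\a)}\intl_{-1}^1|t-sc|^{\a-1}(1-c^2)^{(n-3)/2}\,dc,
\]
where $\sig_{n-2}$ is the area of $S^{n-2}$; the kernel $k$ is homogeneous of degree $n+\a-2$.

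Then I would apply the Hardy--Littlewood--P\'olya principle. By the radial forms of (\ref{00ol}) and (\ref{zzxs}), the problem is the weighted norm of $f_0\mapsto(R^\a f)(e_1,\cdot)$ from $L^p((0,\infty),s^{n-1+\mu p}\,ds)$ to $L^p((0,\infty),2\,t^{\nu p}\,dt)$. Conjugating by the natural isometries onto unweighted $L^p$ turns $k$ into a kernel homogeneous of degree $-1$ --- here the balance (\ref{op98}) is exactly what produces degree $-1$ --- so for this positive operator the norm equals the value of the kernel's Mellin symbol at the real point dictated by the weight, namely
\[
||R^\a||=2^{1/p}\intl_0^\infty k(1,s)\,s^{-n/p-\mu}\,ds,
\]
the factor $2^{1/p}$ coming from evenness in $t$, and the extremizer being the (truncated) power $f_0(s)=s^{-n/p-\mu}$, which is why the bound is attained rather than merely valid. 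Conceptually the evaluation is transparent from the factorization $R^\a=I^\a\circ R$, where $I^\a$ is the one-dimensional Riesz potential in $t$ with normalization (\ref{tong22}): on the common power eigenfunctions, the Radon transform of a radial power and the Riesz potential of a power each return a single ratio of Gamma functions (beta integrals in $(1-c^2)^{(n-3)/2}$ and in $|1-sc|^{\a-1}$ respectively), and their product, combined with $\sig_{n-2}=2\pi^{(n-1)/2}/\Gam((n-1)/2)$ and $\gam_1(\a)$, collapses to the stated $c_\a$. The two beta integrals converge precisely when $\tfrac{n/p'-\mu}{2}>0$ and $\tfrac{1-\a+\mu-n/p'}{2}>0$, i.e. exactly in the window (\ref{op981}); these are the arguments of the two numerator Gamma factors.

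The main obstacle I anticipate is not the boundedness but the \emph{sharpness} together with the exact closed form: one must verify that the radial reduction loses nothing (both that general $f$ cannot exceed the radial bound, handled by the Minkowski step, and that the Hardy--Littlewood--P\'olya constant is genuinely attained in the limit along power inputs), and then carry out the two beta-integral evaluations and the Gamma-function bookkeeping so that all spurious constants cancel into the displayed $c_\a$. Both are routine once the homogeneous-kernel structure is in place, but the Gamma algebra is where the computation must be done with care.
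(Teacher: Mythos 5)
Your proposal is correct and, in the end, produces exactly the paper's constant: your Mellin integral $2^{1/p}\int_0^\infty k(1,s)\,s^{-n/p-\mu}\,ds$ is precisely the paper's $c_\a=\frac{2^{1/p}}{\gamma_1(\a)}\int_{\rn}|1-y_1|^{\a-1}\,|y|^{-\mu-n/p}\,dy$ written in polar coordinates, and your factorization $R^\a=I^\a\circ R$ on power functions is the same splitting the paper uses to evaluate it (an $(n-1)$-dimensional beta integral times a one-dimensional Riesz convolution). The architecture, however, is genuinely different. The paper applies Minkowski's inequality directly to the representation (\ref{tongyi}), getting the upper bound with constant $c_\a$ in one stroke, and then proves $||R^\a||\ge c_\a$ by a self-contained duality argument: a H\"older pairing, the choice $\vp_0(t)=t^{\mu p-\a}f_0^{p-1}(t)$, truncated power functions $f_0$, and a limit $\e\to 0$, with separate short treatments of $p=1$ and $p=\infty$. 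You instead use Minkowski in the rotation variable to show that the operator norm is attained on radial functions, conjugate the resulting one-dimensional operator to an unweighted positive kernel operator homogeneous of degree $-1$ on $L^p(0,\infty)$, and invoke the Hardy--Littlewood--P\'olya theorem as a black box for both the bound and its sharpness. Your route is more modular and explains conceptually why the norm is a single Mellin value; the paper's is self-contained and covers all $1\le p\le\infty$ uniformly (the classical homogeneous-kernel theorem is usually stated for $1<p<\infty$, so at the endpoints you would still need the direct positive-kernel computations that the paper writes out). Your observation that (\ref{op981}) forces $0<\a<1$, hence a genuinely positive kernel, is correct and is exactly what legitimizes both your HLP step and the paper's duality step. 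One bookkeeping caution: in the final formula both surviving numerator factors $\Gam\left(\frac{n/p'-\mu}{2}\right)$ and $\Gam\left(\frac{1-\a+\mu-n/p'}{2}\right)$ come from the one-dimensional Riesz convolution (its convergence at $0$ and at $\infty$ respectively); the spherical beta integral converges under the weaker condition $\mu>n/p'-1$ and its Gamma factor cancels, so your attribution of one constraint of (\ref{op981}) to each of the two integrals is slightly off, although your conclusion that the full kernel integral converges exactly on the window (\ref{op981}) is right.
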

\begin{remark} The necessity of  (\ref{op98}) can be proved using the standard scaling argument; cf. \cite[p. 118]{St}. Let $f_\lam (x)=f(\lam x)$, $\lam>0$. Then
\[ ||f_\lam ||_{p,\mu}=\lam^{-\mu-n/p}\,||f||_{p,\mu}, \qquad ||R^\a f_\lam ||^\sim_{p,\nu}=\lam^{-\nu-n-\a+1/p'}\,||R^\a f||^\sim_{p,\nu}.\]
Hence, whenever $||R^\a f||^\sim_{p,\nu}\le c \,||f||_{p,\mu}$ with $c$ independent of $f$, we get
$\nu=\mu -\a-(n-1)/p'$.
 As we shall see below, the condition  (\ref{op981}) is also best possible.
\end{remark}

\subsection {Proof of Theorem \ref{o9iu}}${}$\hfill

\subsubsection {Step 1}
 Let us prove (\ref{op982}).  For $t>0$, changing variables $\theta=\gam e_1$, $\gam \in O(n)$, and $x=t\gam y$, we get
\be\label{tongyi} (R^\a f)(\gam e_1, t)=\frac{t^{\a+n-1}}{\gamma_1 (\a)}\intl_{\bbr^n}
f(t
\gam y)\, |1-y_1|^{\a -1} \, dy. \ee
If $1\le p< \infty$, then, by Minkowski's inequality, using (\ref {zzxs}) and (\ref{00ol}), we obtain that the norm  $||R^\a f||^\sim_{p,\nu}$ does not exceed the following:
\bea
&&
\frac{2^{1/p}}{\gamma_1 (\a)}\intl_{\bbr^n}|1-y_1|^{\a -1}
\left (\sigma_{n-1}\intl_0^\infty \intl_{O(n)} t^{(\a+n-1+\nu) p}\,|f (t\gam y)|^p \,d\gam dt \right )^{1/p} \!dy\nonumber\\
&&\label{ppza}\qquad =c_\a \, ||f||_{p,\mu},\qquad  c_\a=\frac{2^{1/p}}{\gamma_1 (\a)}\intl_{\bbr^n}|1-y_1|^{\a -1} |y|^{-\mu-n/p}\, dy.\eea
If $p=\infty$, then (\ref {zzxsb}) and (\ref{00olb}) give a similar estimate
$||R^\a f||^\sim_{\infty,\nu}\le c_\a \, ||f||_{\infty,\mu}$ in which $c_\a$ has the same form  with $1/p=n/p=0$.

To evaluate $c_\a$, denoting $\lam=\mu +n/p$, we have
\[
 c_\a=\frac{2^{1/p}}{\gamma (\a)}\intl_{-\infty}^\infty |1-y_1|^{\a -1}\,dy_1
  \intl_{\bbr^{n-1}}    (|y'|^2 +y_1^2)^{-\lam/2}\, dy'=c_1 \,c_2,\]
\[c_1= 2^{1/p}\intl_{\bbr^{n-1}}    (1+|z|^2)^{-\lam/2}\, dz=\frac{2^{1/p}\,\pi^{(n-1)/2}\,\Gam ((\lam +1-n)/2)}
  {\Gam (\lam/2)},\]
\[
  c_2=\frac{1}{\gamma_1 (\a)}\intl_{-\infty}^\infty |1-y_1|^{\a -1}|y_1|^{n-\lam-1}\,dy_1=
  \frac{\gamma_{1}(n-\lam)}{\gamma_{1}(\a+n-\lam)};\]
see, e.g., \cite{La}, where  convolutions of Riesz kernels are considered in any dimensions. Combining these formulas with (\ref{tong22}), we obtain the desired expression. We observe that the  integral in (\ref{ppza}) is finite  if and only if
$\a-1+n/p'<\mu<n/p'$, which is (\ref{op981}).

\subsubsection {Step 2}\label{00aq} Let us show that $c_\a=||R^\a||$.  By Step 1, $||R^\a||\le c_\a$. Thus, it remains to prove that $||R^\a||\ge c_\a$.
Since the operator $R^\a: L^p_\mu (\bbr^n) \to L^p_\nu (S^{n-1} \times \bbr)$ is bounded, then for any $f\in L^p_\mu (\bbr^n)$ and $\vp \in L^{p'}_{-\nu} (S^{n-1} \times \bbr)$ by H\"older's inequality we have
\be\label{lloi}
I=\Bigg |\intl_{S^{n-1} \times \bbr}(R^\a f)(\theta, t)\, \vp (\theta, t)\, dt d\theta \Bigg |\le ||R^\a||\,  ||f||_{p,\mu}\, ||\vp||^\sim_{p', -\nu}.\ee
Suppose $ f(x)\equiv f_0(|x|)\ge 0$ and  $\vp (\theta, t)\equiv \vp_0 (|t|)\ge 0$. Then
\bea
I\!&=&\!\frac{2\sigma_{n-1}}{\gamma_1 (\a)}\intl_0^\infty \vp_0 (t)\, dt \intl_{\bbr^n}
f_0(|x|)\, |t-x\cdot e_1|^{\a -1} \, dx\nonumber\\
&=&\!\frac{2\sigma_{n-1}}{\gamma_1 (\a)}\intl_0^\infty \vp_0 (t)\, dt \intl_0^\infty r^{n-1} f_0(r)\,dr \intl_{S^{n-1} } |t-r\eta_1|^{\a -1} \, d\sig (\eta)\nonumber\\
&=&\!\frac{2\sigma_{n-1}}{\gamma_1 (\a)} \!\intl_{S^{n-1} } \!\! d\sig (\eta)\intl_0^\infty |1\!-\!s\eta_1|^{\a -1} s^{n-1}\,ds\intl_0^\infty\! \vp_0 (t) f_0(ts)\, t^{n+\a-1}\, dt.\nonumber\eea
Let $1< p<\infty$. Then
\bea
\label {844f}||f||_{p,\mu}&=&\Bigg (\sigma_{n-1} \intl_0^\infty r^{n-1+\mu p} f_0^p(r)\,dr\Bigg )^{1/p}, \\
||\vp||^\sim_{p', -\nu}&=&\Bigg (2\sigma_{n-1} \intl_0^\infty t^{-\nu p'} \vp_0^{p'}(t)\,dt\Bigg )^{1/p'}.\nonumber\eea
Choose $\vp_0(t)=t^{\mu p -\a} f_0^{p-1}(t)$ so that $||\vp||^\sim_{p', -\nu}=2^{1/p'}||f||_{p,\mu}^{p-1}$. Then  (\ref{lloi}) yields
\bea
&&\frac{2^{1/p}\sigma_{n-1}}{\gamma_1 (\a)}\intl_{S^{n-1} } \!\! d\sig (\eta)\intl_0^\infty |1\!-\!s\eta_1|^{\a -1} s^{n-1}\,ds\nonumber\\
\label{lloiz}&&\times \intl_0^\infty\! t^{\mu p +n-1} f_0^{p-1}(t) f_0(ts)\, dt\le ||R^\a||\,  ||f||^p_{p,\mu}.\eea
Finally we set $f_0(t)=0$ if $t<1$ and  $f_0(t)=t^{-\mu-n/ p -\e}$, $\e>0$, if $t>1$. Then $||f||^p_{p,\mu}=\sigma_{n-1}/ \e p$ and
 (\ref{lloiz}) becomes
\bea
||R^\a||&\ge& \frac{2^{1/p}}{\gamma_1 (\a)}\intl_{S^{n-1} } \!\! d\sig (\eta)\intl_0^\infty |1\!-\!s\eta_1|^{\a -1} s^{n/p' - \mu -1-\e}\left \{\begin{array} {ll}s^{\e p}, & s<1 \\
1, & s>1 \\
\end{array}\right \}\, ds\nonumber\\
&=& \frac{2^{1/p}}{\gamma_1 (\a)}\intl_{\bbr^n}|1-y_1|^{\a -1} |y|^{-\mu-n/p-\e}\left \{\begin{array} {ll} |y|^{\e p}, & |y|<1 \\
1, & |y|>1 \\
\end{array}\right \}\, dy;\nonumber\eea
cf. (\ref {ppza}). Passing to the limit as $\e \to 0$, we obtain  $||R^\a||\ge c_\a$.

If $p=1$, then $\nu=\mu-\a$. We choose $\vp_0 (t)=t^{\mu-\a}$ and proceed as above.
If $p=\infty$,   we choose $f_0 (r)=r^{-\mu}$. Then   $ ||f||_{\infty,\mu}=1$,
\[
 I=\frac{2\sigma_{n-1}}{\gamma_1 (\a)} \!\intl_{S^{n-1} } \!\! d\sig (\eta)\intl_0^\infty |1\!-\!s\eta_1|^{\a -1} s^{n-\mu-1}\,ds\intl_0^\infty\! \vp_0 (t) \, t^{n+\a-\mu-1}\, dt,\]
 and $I\le ||R^\a||\,   ||\vp||^\sim_{1, -\nu}$.
We set $\vp_0 (t)=0$  if $t<1$ and $\vp_0 (t)=t^{-\del}$ if $t>1$, where $\del$ is big enough. This gives
  \[ \frac{1}{\gamma_1 (\a)}\intl_{\bbr^n}|1-y_1|^{\a -1} |y|^{-\mu}\, dy\le ||R^\a||,\]
  or $c_\a \le ||R^\a||$, as desired. \hfill $\square$

\subsection{The case $\a=0$}
This case corresponds to the Radon transform (\ref{rtra1}) and requires  independent consideration.
\begin{theorem}\label {o9iur} Let  $1\le p\le \infty$, $1/p+1/p'=1$,
\be\label{op98r} \nu=\mu -(n-1)/p',\qquad  \mu>n/p'-1.\ee
Then $
||R f||^\sim_{p,\nu}\le c \,||f||_{p,\mu}$, where
\be\label{op983r}
c=||R||=\frac{\displaystyle  {2^{1/p}\, \pi^{(n-1)/2}\, \Gam \left(\frac{1+\mu -n/p'}{2}\right)}}{\displaystyle{\Gam \left(\frac{\mu +n/p}{2}\right)}}\,
.\ee
\end{theorem}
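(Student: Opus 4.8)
The plan is to treat this as the $\a=0$ endpoint of Theorem \ref{o9iu} and run the same two-step scheme, the only structural change being that, as $\a\to0$, the normalized kernel $\gamma_1(\a)^{-1}|1-y_1|^{\a-1}\,dy$ on $\bbr^n$ collapses to $(n-1)$-dimensional Lebesgue measure on the hyperplane $\{y\in\bbr^n:\,y_1=1\}$. Writing $y=e_1+v$ with $v\in e_1^\perp\cong\bbr^{n-1}$, so that $|y|^2=1+|v|^2$, I would first carry out \emph{Step 1}: for $t>0$ parametrize $\theta=\gam e_1$ in (\ref{rtra1}) and substitute $u=t\gam v$ to obtain the analogue of (\ref{tongyi}),
\be\label{radreduce}
(Rf)(\gam e_1,t)=t^{n-1}\intl_{\bbr^{n-1}}f\big(t\gam(e_1+v)\big)\,dv.
\ee
Applying Minkowski's inequality precisely as in the passage from (\ref{tongyi}) to (\ref{ppza}), together with rotation invariance of the Haar measure and the polar formulas (\ref{00ol}), (\ref{zzxs}), the homogeneity forces $\nu=\mu-(n-1)/p'$, i.e. (\ref{op98r}), and yields $||Rf||^\sim_{p,\nu}\le c\,||f||_{p,\mu}$ with
\be\label{radc}
c=2^{1/p}\intl_{\bbr^{n-1}}(1+|v|^2)^{-(\mu+n/p)/2}\,dv.
\ee
This is exactly the factor $c_1$ evaluated in the proof of Theorem \ref{o9iu} with $\lam=\mu+n/p$, so $c$ equals the right-hand side of (\ref{op983r}); the case $p=\infty$ is handled with $1/p=n/p=0$ as before.

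\emph{The convergence range is the one genuine difference.} In Theorem \ref{o9iu} the kernel integral split as a product $c_1c_2$, with $c_2$ a one-dimensional Riesz convolution whose Gamma factor $\Gam((n/p'-\mu)/2)$ produced the upper bound $\mu<n/p'$. Restricting to the hyperplane $\{y_1=1\}$ removes this convolution entirely---formally $c_2=1$ at $\a=0$, and in the closed formula for $c_\a$ the factor $\Gam((n/p'-\mu)/2)$ cancels between numerator and denominator. Hence (\ref{radc}) converges iff $\mu+n/p>n-1$, that is iff $\mu>n/p'-1$, with no upper restriction, which is precisely (\ref{op98r}) and the reason the endpoint needs a separate treatment.

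\emph{Step 2.} For the matching lower bound $||R||\ge c$ I would reproduce the duality argument of Step 2 of the proof of Theorem \ref{o9iu}. Testing (\ref{lloi}) on radial $f(x)=f_0(|x|)\ge0$, $\vp(\theta,t)=\vp_0(|t|)\ge0$ and inserting (\ref{radreduce}) turns the pairing into the $\a=0$ analogue of the three-fold integral preceding (\ref{844f}),
\be\label{radpair}
I=2\sig_{n-1}\intl_{\bbr^{n-1}}dv\intl_0^\infty\vp_0(t)\,f_0\big(t(1+|v|^2)^{1/2}\big)\,t^{n-1}\,dt,
\ee
with $(1+|v|^2)^{1/2}$ in the role of the radial variable $s$. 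Choosing $\vp_0(t)=t^{\mu p}f_0^{p-1}(t)$, so that $||\vp||^\sim_{p',-\nu}=2^{1/p'}||f||_{p,\mu}^{p-1}$ by (\ref{844f}), and then $f_0(t)=0$ for $t<1$, $f_0(t)=t^{-\mu-n/p-\e}$ for $t>1$, the inner $t$-integral separates the $v$-dependence as $(1+|v|^2)^{-(\mu+n/p+\e)/2}/(\e p)$ while $||f||^p_{p,\mu}=\sig_{n-1}/(\e p)$; here $(1+|v|^2)^{1/2}\ge1$ makes the two cutoffs consistent. After cancelling common factors, (\ref{lloi}) gives $2^{1/p}\intl_{\bbr^{n-1}}(1+|v|^2)^{-(\mu+n/p+\e)/2}\,dv\le||R||$, and letting $\e\to0$ yields $c\le||R||$. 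The endpoints $p=1$ (take $\vp_0(t)=t^\mu$) and $p=\infty$ (take $f_0(r)=r^{-\mu}$ with a cutoff power $\vp_0$) go through verbatim.

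\emph{The only point demanding genuine care} is the convergence analysis: checking that the hyperplane reduction eliminates the upper bound on $\mu$ while the surviving beta integral still reproduces the $\a\to0^+$ limit of $c_\a$. As a consistency check, letting $\a\to0^+$ in Theorem \ref{o9iu} recovers (\ref{op983r}) on the overlap $n/p'-1<\mu<n/p'$, but the direct argument above is what reaches the full range $\mu>n/p'-1$.
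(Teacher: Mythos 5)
Your proposal is correct and follows essentially the same route as the paper: the same reduction $(Rf)(\gam e_1,t)=t^{n-1}\int_{\bbr^{n-1}}f(t\gam(e_1+z))\,dz$ with Minkowski's inequality and the substitution $s=t(1+|z|^2)^{1/2}$ yielding $c=2^{1/p}\int_{\bbr^{n-1}}(1+|z|^2)^{-(\mu+n/p)/2}\,dz$, and the same duality argument with the test functions $\vp_0(t)=t^{\mu p}f_0^{p-1}(t)$, $f_0(t)=t^{-\mu-n/p-\e}\chi_{\{t>1\}}$ for the lower bound (which the paper merely states ``mimics'' its Section 2.2.2). Your verification of the sharpness argument and of the convergence range $\mu>n/p'-1$ is accurate, so nothing further is needed.
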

\begin{proof} The necessity of  (\ref{op98r}) can be checked  as in Theorem  \ref{o9iu}. To prove the norm inequality,  setting $\theta=\gam e_1$, $\gam \in O(n)$,  $t>0$, we have
\[(Rf) (\theta, t)=\intl_{\theta^{\perp}} f(t\theta +
u) \,d_\theta u=t^{n-1}\intl_{\bbr^{n-1}} f(t\gam (e_1 +z))\, dz.\]
Hence,
\bea
&&||R f||^\sim_{p,\nu}=\Bigg (2\sigma_{n-1}\intl_0^\infty \intl_{O(n)} \Bigg |\,\intl_{\bbr^{n-1}} t^{(n-1+\nu) p}\,|f (t\gam (e_1 +z))|^p \,dz  \,\Bigg |\,d\gam dt \Bigg )^{1/p}\nonumber\\
&&\le (2\sigma_{n-1})^{1/p}\intl_{\bbr^{n-1}}\Bigg (\intl_0^\infty \intl_{O(n)}|f (t\gam (e_1 +z))|^p \,t^{(n-1+\nu) p}\,d\gam dt \Bigg )^{1/p}dz\nonumber\\
&& \qquad \qquad\qquad \mbox{\rm (set  $t|e_1 +z|=t(1+|z|^2)^{1/2}=s$)}\nonumber\\
&& =c\,\Bigg (\intl_0^\infty \intl_{S^{n-1}}|f (s\eta)|^p \,s^{(n-1+\nu) p}\,d\sigma (\eta) ds \Bigg )^{1/p}=c\, ||f||_{p,\mu},\nonumber\eea
where
\[c= 2^{1/p}\,\intl_{\bbr^{n-1}}   \frac{dz}{ (1+|z|^2)^{(\nu+n-1/p')/2}}= 2^{1/p}\,\intl_{\bbr^{n-1}}   \frac{dz}{ (1+|z|^2)^{(\mu+n/p)/2}}\,.\]
The last integral gives an expression in (\ref{op983r}).
The proof of the equality $c=||R||$  mimics that in  Section \ref{00aq}.
\end{proof}

\section {Fractional integrals associated to the $k$-plane transforms}

We denote by $\Pi_{n,k}$  the set of all nonoriented $k$-dimensional  planes  in $\bbr^n$, $1\le k\le n-1$.
To  parameterize  such planes and define the  corresponding analogues of $R$ and $R^\a$, we introduce the Stiefel manifold
$V_{n, n-k} \sim O(n)/O(k)$    of $n\times (n-k)$ real matrices, the columns of which
 are mutually orthogonal unit $n$-vectors. For $v\in V_{n, n-k}$,  $dv$ stands for the  left $O(n)$-invariant probability measure on $V_{n, n-k}$ which is also right $O(n-k)$-invariant.
  Every  plane $\t\in \Pi_{n,k}$ can be  parameterized by
 \be\label{hpplk} \t(v, t)=\{x\in \bbr^n: v^T x =t\}, \qquad (v, t)\in
 V_{n, n-k}\times \bbr^{n-k}, \ee
 where $v^T$ stands for the transpose of the matrix $v$.
 The case $k=n-1$  agrees with  (\ref{hppl}).
  Clearly, \be\label{00mak} \t(v, t)=\t(v\om^T, \om t) \quad \forall\,  \om \in O(n-k).\ee
 This equality is a  substitute for  (\ref{00ma}) for all  $1\le k\le n-1$.

 The $k$-plane transform takes a function $f$ on $\bbr^n$ to a function $(R_k f)(\t)=\int_\t f$ on $\Pi_{n,k}$.   In terms of  the parametrization (\ref {hpplk}) it has the form
\be\label{rtra1k} (R_k f) (v, t)=\intl_{v^{\perp}} f(vt +
u) \,d_v u,\ee
 where $v^{\perp}$ denotes the $k$-dimensional linear subspace orthogonal to   $v$ and
$d_v u$ stands the usual Lebesgue measure on $v^{\perp}$. Similarly,
\be\label{tongk} (R_k^\a
f)(v, t)=\frac{1}{\gamma_{n-k} (\a)}\intl_{\bbr^n}
f(x)\,|v^T x -t|^{\a +k-n} \, dx, \ee
where $|v^T x -t|$ denotes for the Euclidean norm of $v^T x -t$ in $\bbr^{n-k}$,
\be\label {012a} \gamma_{n-k}(\a)=
  \frac{2^\a\pi^{(n-k)/2}\Gamma(\a/2)}{\Gamma((n-k-\a)/2)}.
  \ee
Here $1/\gamma_{n-k} (\a)$ is the same normalizing coefficient as in the Riesz potential on $\bbr^{n-k}$ \cite{La, Ru96}.

\begin{remark} One can regard $\Pi_{n,k}$ as a fiber bundle over the Grassmann manifold of all $k$-dimensional linear subspaces of $\bbr^n$. The corresponding parametrization of $k$-planes is  different from (\ref{hpplk}); cf. \cite{Ru04b, Ru12}. In the present article we prefer to work with parametrization (\ref{hpplk}) because it reduces to   (\ref{hppl}) when $k=n-1$.
\end{remark}

Let
\[
L^p_\nu ( V_{n, n-k}\times \bbr^{n-k})=\{ \vp: ||\vp||^\sim_{p,\nu}\equiv
 ||\,|t|^\nu \vp||_{L^p(V_{n, n-k}\times \bbr^{n-k})}<\infty \},\]
 where the $L^p$ norm on $V_{n, n-k}\times \bbr^{n-k}$ is taken with respect to the measure $dvdt$;
$$v_0=
\left[\begin{array} {cc} I_{n-k} \\ 0
 \end{array} \right]\in V_{n, n-k},$$  $I_{n-k}$ is the identity $(n-k)\times (n-k)$ matrix. Then
  \be\label{zzxsk}
||\vp||^\sim_{p,\nu}\!=\! \left (\!\sigma_{n-k-1}\!\intl_0^\infty \!r^{\nu p+n-k-1}\,dr\!\!\intl_{O(n)}\!\! d\gam \!\!\! \intl_{O(n-k)} \!\!\!\!
|\vp (\gam v_0, r\om e_1)
|^p\,d\om\! \right )^{1/p}\!\!\!\!, \ee
 if $1\le p< \infty$,  and
\be\label{zzxsbk}||\vp||^\sim_{\infty,\nu}=\underset{r, \gam, \om}{\esp} \, |r|^\nu\,|\vp (\gam v_0, r\om e_1)
|.\ee

\begin{theorem}\label {o9iuk} Let  $1\le p\le \infty$, $1/p+1/p'=1$,  $\a >0$. Suppose
\be\label{op98k} \nu=\mu -\a-k/p',\ee
\be\label{op981k} \a+k-n/p<\mu<n/p'.\ee
Then $||R_k^\a f||^\sim_{p,\nu}\le c_{\a,k} \,||f||_{p,\mu},$
where
\[
c_{\a,k}\!=\!||R_k^\a||\!=\!2^{-\a}\pi^{k/2}\left (\frac{\sigma_{n-k-1}}{\sigma_{n-1}}\right )^{1/p}
\,\frac{\displaystyle  { \Gam \left(\frac{n/p'\! -\!\mu}{2}\right )\, \Gam \left(\frac{\mu \!+\!n/p \! -\!k\!-\!\a}{2}\right)}}{\displaystyle{\Gam \left(\frac{\mu\! +\!n/p}{2}\right)\,\Gam \left(\frac{n/p'\! -\!\mu\!+\!a}{2}\right)}}\,
.\]
\end{theorem}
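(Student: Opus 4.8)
The plan is to follow the two-step scheme of Theorem \ref{o9iu}: first establish the norm inequality with the stated constant, then show it is attained. The genuinely new ingredient is a preliminary symmetrization that removes the $(n-k)$-dimensional spread of the variable $t$ and restores the hyperplane-type structure. I would first note that the defining formula (\ref{tongk}) respects the identification (\ref{00mak}): since $\om\in O(n-k)$ is orthogonal, $|(v\om^T)^Tx-\om t|=|\om(v^Tx-t)|=|v^Tx-t|$, whence $(R_k^\a f)(v\om^T,\om t)=(R_k^\a f)(v,t)$. Applying this with $v=\gam v_0$, $t=r\om e_1$ and $\om'=\om^T$ gives $(R_k^\a f)(\gam v_0,r\om e_1)=(R_k^\a f)(\gam\,\diag(\om,I_k)\,v_0,\,re_1)$, using $v_0\om=\diag(\om,I_k)v_0$. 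By right-invariance of the Haar measure on $O(n)$ the inner $O(n)$-average in (\ref{zzxsk}) is then independent of $\om$, so the $O(n-k)$-integration is trivial and I may compute $||R_k^\a f||^\sim_{p,\nu}$ with $t=re_1$ fixed.

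For Step 1, fix $r>0$ and change variables $x=r\gam y$ in (\ref{tongk}). Writing $y=(y',y'')\in\bbr^{n-k}\times\bbr^k$ with $y'=v_0^Ty$, one has $|(\gam v_0)^Tx-re_1|=r|y'-e_1|$ and $dx=r^n\,dy$, so a factor $r^{\a+k}$ comes out:
\[
(R_k^\a f)(\gam v_0,re_1)=\frac{r^{\a+k}}{\gamma_{n-k}(\a)}\intl_{\bbr^n}f(r\gam y)\,|y'-e_1|^{\a+k-n}\,dy.
\]
For $1\le p<\infty$ I apply Minkowski's integral inequality in the weighted norm (\ref{zzxsk}); the choice (\ref{op98k}) of $\nu$ makes the exponent of $r$ collapse to $\mu p+n-1$, and the substitution $s=r|y|$ together with rotation-invariance factors out $||f||_{p,\mu}$ through (\ref{00ol}). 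This gives $||R_k^\a f||^\sim_{p,\nu}\le c_{\a,k}||f||_{p,\mu}$ with
\[
c_{\a,k}=\frac{1}{\gamma_{n-k}(\a)}\left(\frac{\sigma_{n-k-1}}{\sigma_{n-1}}\right)^{1/p}\intl_{\bbr^n}|y'-e_1|^{\a+k-n}\,|y|^{-\mu-n/p}\,dy,
\]
the case $p=\infty$ being identical with $1/p=n/p=0$.

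The heart of the matter, and the step I expect to be the main obstacle, is the evaluation of this $n$-dimensional integral $J$. I would split $y=(y',y'')$ and integrate first over $y''\in\bbr^k$, using $\intl_{\bbr^k}(|y'|^2+|y''|^2)^{-\lam/2}\,dy''=|y'|^{k-\lam}\pi^{k/2}\Gam(\tfrac{\lam-k}{2})/\Gam(\tfrac\lam2)$ with $\lam=\mu+n/p$, which reduces $J$ to the power-weighted convolution $\intl_{\bbr^{n-k}}|y'-e_1|^{\a+k-n}|y'|^{k-\lam}\,dy'$ over $\bbr^{n-k}$. This is a composition of two Riesz kernels at a unit vector, whose value in Gamma functions is supplied by the classical formula (see \cite{La}); its convergence is exactly the two-sided bound (\ref{op981k}), the condition at infinity yielding the lower bound $\a+k-n/p<\mu$ and local integrability at $y'=0$ yielding the upper bound $\mu<n/p'$. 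After the cancellation $\Gam(\tfrac{\lam-k}{2})=\Gam(\tfrac{\mu+n/p-k}{2})$ and insertion of $\gamma_{n-k}(\a)$ from (\ref{012a}), the factors $\Gam(\a/2)$ and $\Gam(\tfrac{n-k-\a}{2})$ drop out and $J$ collapses to the asserted $c_{\a,k}$.

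Finally, the sharpness $c_{\a,k}=||R_k^\a||$ follows the template of Section \ref{00aq}. Testing the dual inequality against radial $f(x)=f_0(|x|)\ge0$ and $\vp(v,t)=\vp_0(|t|)\ge0$, the bilinear form reduces (by $O(n)$-invariance, fixing $v=v_0$) to spherical integrals over $S^{n-1}$ and $S^{n-k-1}$; choosing $f_0(t)=t^{-\mu-n/p-\e}$ for $t>1$ and $0$ otherwise, with $\vp_0$ dual to it, and letting $\e\to0$ recovers the same constant as a lower bound. The endpoints $p=1$ and $p=\infty$ are handled by the obvious modifications of the test functions, exactly as in Section \ref{00aq}.
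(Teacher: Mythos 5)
Your proposal is correct and follows essentially the same two-step scheme as the paper: Minkowski's inequality after the dilation--rotation change of variables, evaluation of the resulting constant by splitting $\bbr^n=\bbr^{n-k}\times\bbr^k$ into the $(1+|z|^2)^{-\lam/2}$ integral and a Riesz-kernel composition, and sharpness via duality with the same radial test functions $f_0(r)=r^{-\mu-n/p-\e}\chi_{\{r>1\}}$ and $\vp_0(r)=r^{\mu p-\a}f_0^{p-1}(r)$. Your only organizational difference is to use the invariance $(R_k^\a f)(v\om^T,\om t)=(R_k^\a f)(v,t)$ and right-invariance of Haar measure to fix $t=re_1$ before changing variables, whereas the paper absorbs the rotation $\tilde\om$ directly into the substitution $x=|t|\gam\tilde\om y$; the two devices are equivalent.
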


\subsection {Proof of Theorem \ref{o9iuk}}${}$\hfill

\subsubsection {Step 1}\label{00aqn} The necessity of  (\ref{op98k}) is a consequence of  the equalities
\[ ||f_\lam ||_{p,\mu}=\lam^{-\mu-n/p}\,||f||_{p,\mu}, \qquad ||R_k^\a f_\lam ||^\sim_{p,\nu}=\lam^{-\nu-\a-k/p'-n/p}\,||R_k^\a f||^\sim_{p,\nu},\]
where $f_\lam (x)=f(\lam x)$, $\lam>0$.
Let $\gam \in O(n)$ and $\om \in O(n-k)$ be such that $v=\gam v_0$, $t=|t|\om e_1$.
Changing variables in (\ref{tongk}) by setting
\[ x=|t|\gam \tilde\om y, \qquad \tilde \om=\left[\begin{array} {cc} \om & 0 \\ 0 & I_k
 \end{array} \right],\]
 we obtain
\be\label{tongyik} (R_k^\a f)(\gam v_0, |t| \om e_1)=\frac{|t|^{\a+k}}{\gamma_{n-k} (\a)}\intl_{\bbr^n}
f(|t|
\gam \tilde\om y)\, |v_0^T y-e_1|^{\a +k-n} \, dy. \ee
Suppose first $1\le p< \infty$. Then, by Minkowski's inequality, owing to (\ref{zzxsk}) and (\ref{op98k}),  the norm  $||R_k^\a f||^\sim_{p,\nu}$ does not exceed the following:
\bea
&&
\left (\!\sigma_{n-k-1}\!\intl_0^\infty \intl_{O(n)} \intl_{O(n-k)}\!r^{\nu p+n-k-1} | (R_k^\a f)(\gam v_0, r \om e_1)|^p \,d\om d\gam dr\right )^{1/p}\nonumber\\
&&\le
\frac{\sigma_{n-k-1}^{1/p}}{\gamma_{n-k} (\a)}\intl_{\bbr^n}
 |v_0^T y-e_1|^{\a +k-n} \nonumber\\
 &&\times\left (\intl_0^\infty \intl_{O(n)} \intl_{O(n-k)} |f(r\gam \tilde\om y)|^p
r^{
\mu p+n-1}  \,d\om d\gam dr\right )^{1/p}dy=c_{\a,k} \, ||f||_{p,\mu},\nonumber\eea
\be\label{654d}
c_{\a,k}= \left (\frac{\sigma_{n-k-1}}{\sigma_{n-1}}\right )^{1/p}
\frac{1}{\gamma_{n-k} (\a)} \intl_{\bbr^n}
 |v_0^T y-e_1|^{\a +k-n} |y|^{-\mu-n/p}\, dy.\ee
If $p=\infty$ we similarly have
$||R_k^\a f||^\sim_{\infty,\nu}\le c_{\a,k}\, ||f||_{\infty,\mu}$, where  $c_{\a,k}$ has the same form as above with $1/p=n/p=0$.

To compute $c_{\a,k}$, we set $\lam=\mu+n/p$. Then
\[
c_{\a,k}\!=\!\frac{\sigma_{n-k-1}^{1/p}}{\sigma_{n-1}^{1/p}\, \gamma_{n-k} (\a)} \intl_{\bbr^{n-k}}\!\!
|y'-e_1|^{\a +k-n}\, dy'\intl_{\bbr^{k}}(|y'|^2 +|y''|^2)^{-\lam/2}\, dy''\!=\!c_{1}\,c_{2},\]
 where
\[c_{1}= \left (\frac{\sigma_{n-k-1}}{\sigma_{n-1}}\right )^{1/p} \intl_{\bbr^{k}}    (1+|z|^2)^{-\lam/2}\, dz=\left (\frac{\sigma_{n-k-1}}{\sigma_{n-1}}\right )^{1/p}\frac{\pi^{k/2}\,\Gam ((\lam -k)/2)}
  {\Gam (\lam/2)},\]
\[
  c_{2}=\frac{1}{\gamma_{n-k} (\a)} \intl_{\bbr^{n-k}}
|y'-e_1|^{\a +k-n}\,|y'|^{k-\lam}\, dy'=
   \frac{\gamma_{n-k}(n-\lam)}{\gamma_{n-k}(n-\lam+\a)};\]
see also (\ref{012a}). It remains to put these formulas together  and make obvious simplifications.
Note that the repeated integral in the expression  for $c_{\a,k}$ is finite  if and only if
$\a+k-n/p<\mu<n/p'$, which is (\ref{op981k}). Thus $||R_k^\a||\le c_{\a,k}$.

\subsubsection {Step 2} To prove that $||R_k^\a||\ge c_{\a,k}$ we follow the reasoning from Section \ref{00aq}. For any $f\in L^p_\mu (\bbr^n)$ and $\vp \in L^{p'}_{-\nu} ( V_{n, n-k}\times \bbr^{n-k})$,
\[
I=\Bigg |\intl_{V_{n, n-k}\times \bbr^{n-k}}(R_k^\a f)(v, t)\, \vp (v, t)\, dt dv \Bigg |\le ||R_k^\a||\,  ||f||_{p,\mu}\, ||\vp||^\sim_{p', -\nu}.\]
Let
 $ f(x)\equiv f_0(|x|)\ge 0$,  $\vp (v, t)\equiv \vp_0 (|t|)\ge 0$. Then
\bea
&&\!\!\!\!\!I\!=\!\frac{1}{\gamma_{n-k} (\a)}\intl_{\bbr^{n-k}} \vp_0 (|t|)\,|t|^{\a+k}\, dt \intl_{\bbr^n}
f_0(|t||y|)\,|v_0^T y-e_1|^{\a +k-n} \, dy \nonumber\\
&&\!\!\!\!\!=\!\frac{\sigma_{n-k-1}}{\gamma_{n-k} (\a)} \!\intl_{S^{n-1} }\! \!\! d\sig (\eta)\intl_0^\infty |sv_0^T \eta\!-\!e_1|^{\a +k-n} s^{n-1}\,ds \intl_0^\infty\! \vp_0 (r) f_0(rs)\, r^{n+\a-1}\, dr.\nonumber\eea
Suppose $1< p<\infty$. Then  $||f||_{p,\mu}$ can be computed by (\ref{844f}) and
\[
||\vp||^\sim_{p', -\nu}=\left (\!\sigma_{n-k-1}\!\intl_0^\infty \!r^{-\nu p'+n-k-1}\,\vp_0^{p'}\,dr \right )^{1/p}.\]
cf. (\ref{zzxsk}). Choose $\vp_0(r)=r^{\mu p -\a} f_0^{p-1}(r)$. Then
$$
||\vp||^\sim_{p', -\nu}=\left (\frac{\sigma_{n-k-1}}{\sigma_{n-1}}\right )^{1/p'}||f||_{p,\mu}^{p-1}$$ and we have
\bea
&&\!\!\!\!\!\!\frac{\sigma_{n-k-1}}{\gamma_{n-k} (\a)} \!\intl_{S^{n-1} } \!\! d\sig (\eta)\intl_0^\infty |sv_0^T \eta-e_1|^{\a +k-n} s^{n-1}\,ds \intl_0^\infty\! f_0^{p-1}(r) f_0(rs)\, r^{\mu p +n-1}\, dr\nonumber\\
&& \qquad \qquad {}\le \left (\frac{\sigma_{n-k-1}}{\sigma_{n-1}}\right )^{1/p'} ||R_k^\a||\,  ||f||^p_{p,\mu}.\label {kkibf}\eea
Setting $f_0(r)=0$ if $r<1$ and  $f_0(r)=r^{-\mu-n/ p -\e}$, $\e>0$, if $r>1$, we obtain $||f||^p_{p,\mu}=\sigma_{n-1}/ \e p$, and therefore,
\bea
||R_k^\a||\!&\ge& \!\! \left (\frac{\sigma_{n-k-1}}{\sigma_{n-1}}\right )^{1/p}
\frac{1}{\gamma_{n-k} (\a)} \intl_{S^{n-1} }  d\sig (\eta) \nonumber\\
&\times& \!\!\intl_0^\infty |sv_0^T \eta-e_1|^{\a +k-n}
s^{n-1- \mu -n/p -\e}
\left \{\begin{array} {ll}s^{\e p}, & s<1 \\
1, & s>1 \\
\end{array}\right \}\, ds\nonumber\\
&=& \!\! \left (\frac{\sigma_{n-k-1}}{\sigma_{n-1}}\right )^{1/p}\!\!\!
\frac{1}{\gamma_{n-k} (\a)} \intl_{\bbr^{n} }
\frac{ |v_0^T y\!-\!e_1|^{\a +k-n}}{
|y|^{\mu +n/p +\e}}
\left \{\begin{array} {ll} |y|^{\e p}, & |y|<1 \\
1, & |y|>1 \\
\end{array}\right \} dy;\nonumber\eea
cf. (\ref{654d})
Passing to the limit as $\e \to 0$, we obtain  $||R_k^\a||\ge c_{\a,k}$.
The cases $p=1$ and $p=\infty$ are treated as in Section \ref{00aq}.

\subsection{Weighted norm estimates for the $k$-plane transform}
The following statement deals with the $k$-plane transform  (\ref{rtra1k}) and formally corresponds to  $\a=0$ in Theorem \ref {o9iuk}.
\begin{theorem}\label {o9iurk} Let  $1\le p\le \infty$, $1/p+1/p'=1$. Suppose that
\be\label{op98rk} \nu=\mu -k/p',\qquad  \mu> k-n/p.\ee
Then $
||R_k f||^\sim_{p,\nu}\le c_k \,||f||_{p,\mu}$, where
\be\label{op983rk}
c_k=||R_k||=\!\pi^{k/2}\left (\frac{\sigma_{n-k-1}}{\sigma_{n-1}}\right )^{1/p}
\,\frac{\displaystyle  {\Gam \left(\frac{\mu \!+\!n/p \! -\!k}{2}\right)}}{\displaystyle{\Gam \left(\frac{\mu\! +\!n/p}{2}\right)}}\,
.\ee
\end{theorem}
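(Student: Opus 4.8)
The plan is to follow the two-step scheme of Theorems \ref{o9iur} and \ref{o9iuk}: first establish the bound $||R_k||\le c_k$ by Minkowski's inequality in the parametrization (\ref{hpplk}), and then prove $||R_k||\ge c_k$ by testing against radial near-extremizers as in Section \ref{00aq}. Since $R_k$ carries no singular kernel, I expect $c_k$ to be the formal $\a\to0$ limit of $c_{\a,k}$ from Theorem \ref{o9iuk}, but the estimate has to be proved directly from (\ref{rtra1k}).

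For the upper bound (Step 1), I would first make $(R_k f)(\gam v_0,t)$ explicit. Since $v_0$ spans $\bbr^{n-k}\times\{0\}$, its orthogonal complement is $\{0\}\times\bbr^k$, so $(\gam v_0)^\perp=\gam(\{0\}\times\bbr^k)$; writing $t=r\om e_1$ with $r>0$ and $\om\in O(n-k)$ and scaling the fiber variable by $r$ gives
\[(R_k f)(\gam v_0,r\om e_1)=\intl_{\bbr^k} f(\gam(r\om e_1,w))\,dw=r^k\intl_{\bbr^k} f(r\gam(\om e_1,\bar w))\,d\bar w.\]
Substituting this into (\ref{zzxsk}) and applying Minkowski's integral inequality to move $\intl_{\bbr^k}d\bar w$ outside the $L^p$-norm, I reduce matters to the inner integral $\intl_{O(n)}d\gam\intl_{O(n-k)}d\om\,|f(r\gam(\om e_1,\bar w))|^p$. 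The decisive point is rotation invariance: because $|(\om e_1,\bar w)|=\rho:=(1+|\bar w|^2)^{1/2}$ does not depend on $\om$, integrating over $O(n)$ first makes the integrand depend only on the radius $r\rho$, and the $O(n-k)$-average then contributes just the factor $1$. The substitution $s=r\rho$ converts the resulting $r$-integral into $\sigma_{n-1}^{-1}||f||_{p,\mu}^p$ precisely when the exponents match, which forces $\nu=\mu-k/p'$, i.e.\ (\ref{op98rk}), and leaves
\[c_k=\left(\frac{\sigma_{n-k-1}}{\sigma_{n-1}}\right)^{1/p}\intl_{\bbr^k}(1+|\bar w|^2)^{-(\mu+n/p)/2}\,d\bar w.\]
Evaluating this beta-type integral exactly as in the computation of $c_1$ in the proof of Theorem \ref{o9iuk} (with $\lam=\mu+n/p$) yields (\ref{op983rk}); it converges iff $\mu+n/p>k$, which is the sole restriction in (\ref{op98rk}). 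Here I would stress that, unlike Theorem \ref{o9iuk}, no upper bound $\mu<n/p'$ is needed, since in the absence of a singular kernel there is no local integrability constraint. The case $p=\infty$ is handled pointwise from (\ref{zzxsbk}) using $|f(x)|\le|x|^{-\mu}\,||f||_{\infty,\mu}$.

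For sharpness (Step 2), I would mimic Section \ref{00aq} with $R$ replaced by $R_k$. A useful simplification is that for radial $f=f_0(|x|)\ge0$ one has $|vt+u|^2=|t|^2+|u|^2$ for $u\in v^\perp$, so $(R_k f)(v,t)$ is independent of $v$ and equals $\intl_{\bbr^k}f_0((|t|^2+|u|^2)^{1/2})\,du$. Pairing $R_k f$ with $\vp(v,t)=\vp_0(|t|)\ge0$ through H\"older's inequality on $V_{n,n-k}\times\bbr^{n-k}$, reducing both norms to one-dimensional integrals, and choosing $\vp_0(\tau)=\tau^{\mu p}f_0^{p-1}(\tau)$ (so that $||\vp||^\sim_{p',-\nu}=(\sigma_{n-k-1}/\sigma_{n-1})^{1/p'}||f||_{p,\mu}^{p-1}$), I would insert the near-extremal family $f_0(\tau)=\tau^{-\mu-n/p-\e}$ for $\tau>1$ and $0$ otherwise. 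Letting $\e\to0$ reproduces the integral of Step 1 and gives $||R_k||\ge c_k$; the endpoints $p=1$ and $p=\infty$ are treated exactly as in Section \ref{00aq}.

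The main technical point will be the rotation-invariance collapse in Step 1. Relative to Theorem \ref{o9iur} (the case $k=n-1$, where $n-k=1$ and no genuine $O(n-k)$-averaging occurs) the new feature is the extra integration over $O(n-k)$, and one has to verify carefully that, after integrating the $O(n)$- and $O(n-k)$-variables, the entire dependence reduces to the single radius $r\rho$, so that the computation closes against (\ref{00ol}). Everything else---the exponent bookkeeping (checking that the $s$-exponent becomes $n-1+\mu p$) and the beta integral---is routine and already carried out in the preceding theorems.
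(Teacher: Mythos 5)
Your proposal is correct and follows essentially the same route as the paper: the upper bound via the parametrization $v=\gam v_0$, $t=r\om e_1$, Minkowski's inequality, the rotation-invariance collapse with the substitution $s=r(1+|z|^2)^{1/2}$, and the beta integral already computed for $c_1$ in Theorem \ref{o9iuk}; and the lower bound by mimicking Section \ref{00aq} with the dual test function $\vp_0(\tau)=\tau^{\mu p}f_0^{p-1}(\tau)$ and the truncated power near-extremizers. Your Step 2 is in fact spelled out in more detail than the paper (which simply states that it mimics Theorem \ref{o9iu}), and your bookkeeping there is accurate.
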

\begin{proof} As before, the conditions  (\ref{op98rk}) are sharp. To prove the norm inequality,  as in Section \ref {00aqn}
we set $v=\gam v_0$, $t=r\om e_1$,  $ \gam \in O(n)$, $ \om \in O(n-k)$, $r>0$. This gives
 \[ (R_k f)(\gam v_0, r \om e_1)=r^{k}\intl_{\bbr^k}
f(r\gam \tilde\om  (e_1+z)) \, dz, \qquad  \tilde \om=\left[\begin{array} {cc} \om & 0 \\ 0 & I_k
 \end{array} \right].\]
If $1\le p< \infty$, then combining (\ref{zzxsk})
with Minkowski's inequality, we majorize $||R_k f||^\sim_{p,\nu}$ by the following:
\bea
&&\left (\!\sigma_{n-k-1}\!\intl_0^\infty \intl_{O(n)} \intl_{O(n-k)}\!\!\!\!r^{\nu p+n-k-1} | (R_k f)(\gam v_0, r \om e_1)|^p \,d\om d\gam dr\right )^{1/p}\qquad\nonumber\\
&&\le
\sigma_{n-k-1}^{1/p}\intl_{\bbr^k}
 \left (\intl_0^\infty \intl_{O(n)} \intl_{O(n-k)} |f(r\gam \tilde\om (e_1\!+\!z))|^p
r^{\mu p+n-1}  \,d\om d\gam dr\right )^{1/p}\!\!\!dz\nonumber\\
&&= c_{k} \, ||f||_{p,\mu},\nonumber\eea
\[
c_{k}= \left (\frac{\sigma_{n-k-1}}{\sigma_{n-1}}\right )^{1/p}
\intl_{\bbr^k} (1+|z|^2)^{-(\mu +n/p)/2}\, dz.\]
The last integral was computed in the previous section.
In the case $p=\infty$ we  similarly have
$||R_k f||^\sim_{\infty,\nu}\le c_{k}\, ||f||_{\infty,\mu}$  with $1/p=n/p=0$.
The proof of the equality $c_k=||R_k||$  mimics that in Theorem  \ref{o9iu}.
\end{proof}

\bibliographystyle{amsplain}

\end{document}